\newcommand\N{{\mathbf{N}}}
\renewcommand\P{{\mathcal{P}}}
\newcommand\eps{\varepsilon}
\renewcommand\Re{\operatorname{Re}}
\theoremstyle{bams}
\newtheorem{thm}{Theorem}[section]
\newtheorem{theorem}[thm]{Theorem}
\theoremstyle{bamsdefn}
\newtheorem{rem}[thm]{Remark}
\newtheorem{remark}[thm]{Remark}
\begin{document}

\title{A REMARK ON PARTIAL SUMS INVOLVING THE M\"OBIUS FUNCTION}

\cauthor %% mark the next author as corresponding author
\author[1]{Terence Tao}
\address[1]{Department of Mathematics, UCLA, Los Angeles CA 90095-1555\email{tao@math.ucla.edu}}

\authorheadline{T. Tao}

\support{The author is supported by NSF Research Award DMS-0649473, the NSF Waterman award and a grant from the MacArthur Foundation.  }

\begin{abstract}  Let $\langle \P \rangle \subset \N$ be a multiplicative subsemigroup of the natural numbers $\N = \{1,2,3,\ldots\}$ generated by an arbitrary set $\P$ of primes (finite or infinite).  We given an elementary proof that the partial sums $\sum_{n \in \langle \P \rangle: n \leq x} \frac{\mu(n)}{n}$ are bounded in magnitude by $1$.  With the aid of the prime number theorem, we also show that these sums converge to $\prod_{p \in \P} (1 - \frac{1}{p})$ (the case when $\P$ is all the primes is a well-known observation of Landau).  Interestingly, this convergence holds even in the presence of non-trivial zeroes and poles of the associated zeta function $\zeta_\P(s) := \prod_{p \in \P} (1-\frac{1}{p^s})^{-1}$ on the line $\{ \Re(s)=1\}$.

As equivalent forms of the first inequality, we have $|\sum_{n \leq x: (n,P)=1} \frac{\mu(n)}{n}| \leq 1$, $|\sum_{n|N: n \leq x} \frac{\mu(n)}{n}| \leq 1$, and $|\sum_{n \leq x} \frac{\mu(mn)}{n}| \leq 1$ for all $m,x,N,P \geq 1$.  
\end{abstract}

\classification{primary 11A25}

\keywords{M\"obius function, prime number theorem, effective inequalities}

\maketitle

\section{Introduction}

Let $\N := \{1,2,\ldots\}$ be the natural numbers, and let $\mu: \N \to \{-1,0,+1\}$ be the M\"obius function, thus $\mu(n) = (-1)^k$ when $n$ is the product of $k$ distinct primes, and $\mu(n)=0$ otherwise.  Landau\cite{landau} made the elementary observation that the prime number theorem\footnote{We adopt the convention that $n$ always ranges over the natural numbers, and $p$ over prime numbers, unless otherwise stated.  The notation $o(1)$ denotes any quantity which converges to zero as $x \to \infty$, holding all other parameters fixed.}  
$$ 
\sum_{p \leq x} 1 = (1 + o(1)) \frac{x}{\log x}
$$
is equivalent to the conditional convergence of the infinite sum $\sum_n \frac{\mu(n)}{n} = 0$, thus
\begin{equation}\label{mun}
\sum_{n \leq x} \frac{\mu(n)}{n} = o(1).
\end{equation}
As is well known, \eqref{mun} and the prime number theorem are also both equivalent to the fact that the Riemann zeta function 
\begin{equation}\label{zeta}
\zeta(s) := \sum_n \frac{1}{n^s} = \prod_p (1-\frac{1}{p^s})^{-1}.
\end{equation}
has a simple pole at $s=1$ and no zeroes or poles elsewhere on the line $\{ \Re(s)=1\}$.
See \cite{diamond} for further discussion of this and other equivalences.

On the other hand, one has the elementary bound
\begin{equation}\label{el}
|\sum_{n \leq x} \frac{\mu(n)}{n}| \leq 1
\end{equation} 
for all $x$.  Indeed, to see this we may assume without loss of generality that $x$ is a natural number, and then sum the M\"obius inversion formula\footnote{We write $1_E$ to denote the indicator of a statement $E$, thus $1_E=1$ when $E$ is true and $1_E=0$ otherwise.} $1_{n=1} = \sum_{d|n} \mu(d)$ from $1$ to $x$ to obtain the identity
$$ 1 = \sum_{d \leq x} \mu(d) \lfloor \frac{x}{d}\rfloor = \sum_{d \leq x} \mu(d) \frac{x}{d} - \sum_{d < x} \mu(d) \{ \frac{x}{d} \}$$
where $\{y\} := y - \lfloor y \rfloor$ is the fractional part of $y$.  Using the trivial bound $|\mu(d) \{ \frac{x}{d} \}| \leq 1$ and the triangle inequality one obtains \eqref{el}.  The bound \eqref{el} is of course attained with equality when $x=1$.

In this paper we investigate the analogue of these facts when we ``turn off'' some of the primes in $\N$.  More precisely, we consider an arbitrary set $\P$ of primes (either finite or infinite), and let $\langle \P \rangle \subset \N$ be the multiplicative semigroup generated by $\P$ (i.e. the set of natural numbers whose prime factors all lie in $\P$).  We will study the behaviour of the sum
\begin{equation}\label{motor}
\sum_{n \in \langle \P \rangle: n \leq x} \frac{\mu(n)}{n}
\end{equation}
in $x$ and $\P$.  The analogue of the Riemann zeta function \eqref{zeta} is then the \emph{Burgess zeta function} $\zeta_\P$, defined for $\Re(s) > 1$ by the formula
\begin{equation}\label{zetap}
\zeta_\P(s) := \sum_{n \in \langle \P \rangle} \frac{1}{n^s} = \prod_{p \in \P} (1-\frac{1}{p^s})^{-1}.
\end{equation}

Note that as $\P$ is arbitrary, there need not be any asymptotic formula for the prime counting function $\sum_{p < x} 1$.  For similar reasons, there need not be any meromorphic continuation of $\zeta$ beyond the region $\{ \Re(s) > 1 \}$; for instance, one can easily construct a set $\P$ for which $\zeta_\P(s)$ blows up as $s \to 1^+$ at an intermediate rate between $1$ and $\frac{1}{s-1}$, which is not consistent with any meromorphic continuation at $s=1$.

Related to this, the zeta function $\zeta_\P(s)$ can develop zeroes or singularities on the line $\Re(s)=1$.  Indeed, observe for $\Re(s) > 1$ that
$$ \log|\zeta_\P(s)| = - \sum_{p \in \P} \log |1 - \frac{1}{p^s}| = \sum_{p \in \P} \frac{1}{p^s} + O(1).$$
For any non-zero real number $t$, if one sets $\P$ to be those primes $p$ for which $\{ \frac{t \log p}{2\pi} \} \leq 0.1$ (say), then one can easily check that $|\zeta_\P(1+it+\eps)| \to \infty$ as $\eps \to 0$; similarly, if we set $\P$ instead to be those primes for which $\{ \frac{t \log p}{2\pi} - \frac{1}{2} \} \leq 0.1$, then $|\zeta_\P(1+it+\eps)| \to 0$.  A modification of these examples shows that $\zeta_\P$ need not have a meromorphic continuation at $1+it$ for a fixed $t$, and with a bit more effort one can concoct a $\P$ for which $\zeta_\P$ has no continuation at $1+it$ for \emph{any} $t$; we omit the details.

Despite this, one can\footnote{Note added in proof: as pointed out to us after the submission of this article, these generalisations were essentially contained in \cite{gran} and \cite{skalba} respectively, see Remarks \ref{gran-rem} and \ref{skal-rem}.  We hope however that this article continues to serve an expository role in highlighting these elementary results.} generalise the statements \eqref{mun}, \eqref{el} to arbitrary $\P$.  
We first prove the generalisation of \eqref{el}, which is surprisingly elementary:

\begin{theorem}[Elementary bound]\label{mon}  For any $\P$ and $x$, one has
\begin{equation}\label{monster}
|\sum_{n \in \langle \P \rangle: n \leq x} \frac{\mu(n)}{n}| \leq 1.
\end{equation}
\end{theorem}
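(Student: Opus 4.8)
The plan is to mimic the elementary argument given above for \eqref{el}, but with the Möbius inversion formula replaced by the appropriate ``restricted'' version adapted to the semigroup $\langle \P \rangle$. First I would reduce to the case where $x$ is a natural number lying in $\langle \P \rangle$ (or at least use that only divisors in $\langle \P \rangle$ matter). The key observation is that for $n \in \langle \P \rangle$ one still has $1_{n=1} = \sum_{d \mid n} \mu(d)$, and every divisor $d$ of such an $n$ again lies in $\langle \P \rangle$; so restricting the double sum $\sum_{n \in \langle \P \rangle: n \le x} \sum_{d \mid n} \mu(d)$ stays entirely inside the semigroup. Swapping the order of summation gives
\[
1 = \sum_{d \in \langle \P \rangle: d \le x} \mu(d) \, \#\{ m \in \langle \P \rangle : m \le x/d \}.
\]
The difficulty compared to the classical case is that $\#\{ m \in \langle \P \rangle : m \le y\}$ is not simply $\lfloor y \rfloor$, so we cannot split it as $\frac{x}{d} - \{\frac{x}{d}\}$ and read off the bound directly.

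To get around this I would introduce the counting function $S(y) := \#\{ m \in \langle \P \rangle : m \le y \}$ and the ``fractional-type'' remainder, writing $S(x/d)$ in a way that isolates a genuine main term. The natural guess is that the main term should be $\zeta_\P$-flavoured; more concretely, one expects an identity of the shape
\[
\sum_{d \in \langle \P \rangle: d \le x} \frac{\mu(d)}{d} = 1 - \sum_{d \in \langle \P \rangle: d \le x} \mu(d)\, r(x/d)
\]
for a suitable remainder $r$ taking values in $[0,1)$ — but this is exactly where the semigroup structure must be used, since a naive choice of $r$ will not be bounded. I expect the main obstacle to be finding the correct substitute for $\{x/d\}$: one wants a quantity $r(y) \in [0,1)$ such that $S(y) = (\text{something telescoping through } \langle\P\rangle) - r(y)$ and such that, after multiplying by $\mu(d)$ and summing, the telescoping part collapses to $\frac{x}{d}$-type terms whose contribution is exactly $\sum_{d \le x} \frac{\mu(d)}{d}$ up to the desired error.

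An alternative route, which may in fact be cleaner, is induction on the (finite) set $\P$, peeling off one prime $p$ at a time: if $\P' = \P \cup \{p\}$ then every $n \in \langle \P' \rangle$ factors uniquely as $p^k m$ with $m \in \langle \P \rangle$, and $\mu(p^k m) = 0$ unless $k \le 1$, giving
\[
\sum_{n \in \langle \P' \rangle: n \le x} \frac{\mu(n)}{n} = \sum_{m \in \langle \P \rangle: m \le x} \frac{\mu(m)}{m} - \frac{1}{p} \sum_{m \in \langle \P \rangle: m \le x/p} \frac{\mu(m)}{m}.
\]
Writing $F_\P(x)$ for the left-hand quantity, this is $F_\P(x) - \frac1p F_\P(x/p)$; if one knew $|F_\P| \le 1$ one would only get $|F_{\P'}(x)| \le 1 + \frac1p$, so a crude triangle inequality is not enough, and one needs a sharper bookkeeping (for instance tracking $F_\P(x)$ together with $F_\P(x/p)$, or exploiting that these two partial sums overlap). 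I would therefore first attempt to set up a two-variable invariant — something like a bound on $|F_\P(x)| + (\text{correction involving } F_\P \text{ at a smaller argument})$ — strong enough to be stable under adjoining a prime, and to handle the infinite $\P$ case by a limiting argument since only primes up to $x$ can affect the sum. Reconciling the ``$1+\frac1p$ loss'' with the claimed clean bound of $1$ is the crux; I expect the resolution is that the correction term and the new term have opposite sign on the relevant ranges, so a signed rather than absolute estimate closes the induction.
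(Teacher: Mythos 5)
There is a genuine gap: both of your routes stall at exactly the point you flag, and neither is resolved. In your first route, restricting the double sum to $n \in \langle \P \rangle$ produces the inner count $\#\{m \in \langle \P\rangle : m \le x/d\}$, and as you observe this has no useful decomposition into a linear main term plus a remainder in $[0,1)$: the counting function of $\langle\P\rangle$ is completely irregular for general $\P$, so the quantity $r(y)$ you are hoping for does not exist. The missing idea is to run M\"obius inversion \emph{the other way around}. Let $\P'$ be the complement of $\P$ in the primes; then $1_{n \in \langle \P'\rangle} = \sum_{d \in \langle \P\rangle:\, d\mid n} \mu(d)$, and one sums this identity over \emph{all} integers $n \le x$ (not just those in the semigroup). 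The inner count is then exactly $\lfloor x/d\rfloor$, the floor function reappears, and the classical argument for \eqref{el} goes through with two refinements. First, for integer $x$ one has the sharper bound $|\mu(d)\{x/d\}| \le 1 - \frac1d$, since $\{x/d\}$ is a fraction with denominator dividing $d$. Second, after the triangle inequality one arrives at
\[
\Bigl|x \sum_{n \in \langle \P \rangle: n \leq x} \frac{\mu(n)}{n}\Bigr| \leq
\sum_{n \in \langle \P'\rangle: n \leq x} 1  + \sum_{n \in \langle \P\rangle: n \leq x} 1  - \sum_{n \in \langle \P\rangle: n \leq x} \frac{1}{n},
\]
and since $\langle\P\rangle \cap \langle\P'\rangle = \{1\}$ the first two sums total at most $x+1$; one closes the argument by first disposing of the case $\sum_{n \in \langle\P\rangle: n\le x}\frac1n \le 1$ trivially by the triangle inequality, and otherwise the right-hand side is strictly less than $x$.

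Your second route (induction on $\P$, adjoining one prime at a time) is correctly set up — the recursion $F_{\P\cup\{p\}}(x) = F_\P(x) - \frac1p F_\P(x/p)$ is right — but it does not close as stated: the hypothesis $|F_\P| \le 1$ alone only yields $1 + \frac1p$, and the two-variable invariant you hope for is never exhibited. Nor is there a sign cancellation to exploit in general, since $F_\P(x)$ and $F_\P(x/p)$ can well have the same sign. I would abandon this route in favour of the complementary-semigroup inversion above, which is the argument the paper actually uses.
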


\begin{proof}  We may assume of course that $x$ is a natural number.  We may also assume that
\begin{equation}\label{sop}
\sum_{n \in \langle \P \rangle: n \leq x} \frac{1}{n} > 1.
\end{equation}
since the claim is immediate from the triangle inequality otherwise.

Let $\P'$ be the set of primes not in $\P$.  From M\"obius inversion one has
$$ 1_{n \in \langle \P'\rangle} = \sum_{d \in \langle \P\rangle: d|n} \mu(d)$$
for all natural numbers $n$; summing this over all $n \leq x$ as in the proof of \eqref{el} yields
$$ \sum_{n \in \langle \P'\rangle: n \leq x} 1 = \sum_{d \in \langle \P\rangle: d \leq x} \mu(d) \frac{x}{d} - \sum_{d \in \langle \P\rangle: d \leq x} \mu(d) \left \{ \frac{x}{d} \right \}.$$
Using the bound
$$ |\mu(d) \left \{ \frac{x}{d} \right \}| \leq 1 - \frac{1}{d}$$
we conclude that
\begin{equation}\label{zorn}
 |x \sum_{n \in \langle \P \rangle: n \leq x} \frac{\mu(n)}{n}| \leq 
\sum_{n \in \langle \P'\rangle: n \leq x} 1  + \sum_{n \in \langle \P\rangle: n \leq x} 1  - \sum_{n \in \langle \P\rangle: n \leq x} \frac{1}{n}.
\end{equation}
 Since $\langle \P \rangle$ and $\langle \P' \rangle$ overlap only at $1$, the claim now follows from \eqref{sop}.
\end{proof}

\begin{remark}\label{gran-rem} A bound very similar to \eqref{monster} was also observed in \cite{gran}.  In particular, the following refinement 
$$ x \sum_{n \in \langle \P \rangle: n \leq x} \frac{\mu(n)}{n} = \sum_{n \in \langle \P'\rangle: n \leq x} 1 + (1-\gamma) \sum_{n \in \langle \P\rangle: n \leq x} \mu(n) + O( \frac{x}{\log^{1/5} x} )$$
to \eqref{zorn} was obtained as a special case of \cite[Theorem 3.1]{gran}.  The lower bound of $-1$ for
$\sum_{n \in \langle \P \rangle: n \leq x} \frac{\mu(n)}{n}$ was also improved in \cite[Theorem 2]{gran} to
$$ (1 - 2 \log(1+\sqrt{e}) + 4 \int_1^{\sqrt{e}} \frac{\log t}{t+1} dt) \log 2 + o(1) = -0.4553\ldots + o(1),$$
which is optimal except for the $o(1)$ term, with a characterisation of those primes $\P$ for which the lower bound is attained.
\end{remark}

As corollaries of Theorem \ref{mon} we have
\begin{equation}\label{mock-1}
|\sum_{n \leq x: (n,P)=1} \frac{\mu(n)}{n}| \leq 1
\end{equation}
and
\begin{equation}\label{mock-2}
|\sum_{n|N: n \leq x} \frac{\mu(n)}{n}| \leq 1
\end{equation}
for any $P,x,N \geq 1$; also, from the identity $\mu(mn) =  \mu(m) \mu(n) 1_{(m,n)=1}$ one has
\begin{equation}\label{mock-3}
|\sum_{n \leq x} \frac{\mu(mn)}{n}| \leq 1
\end{equation}
for any $m,x \geq 1$.  These inequalities, which save a factor of $O(\log x)$ over the trivial bound, may be of some value in obtaining effective estimates in sieve theory or in exponential sums over the primes.

Now we turn to the generalisation of \eqref{mun}.

\begin{theorem}[Landau's theorem for arbitrary sets of primes]\label{main}  Let $\P$ be a set of primes.  Then the sum $\sum_{n \in \langle \P \rangle} \frac{\mu(n)}{n}$ converges conditionally to $\prod_{p \in \P} (1-\frac{1}{p})$, thus
\begin{equation}\label{monster2}
\sum_{n \in \langle \P \rangle: n \leq x} \frac{\mu(n)}{n} = \prod_{p \in \P} (1-\frac{1}{p}) + o(1)
\end{equation}
for all $x > 0$, where the decay rate of the error $o(1)$ depends on $\P$.  In particular, $\sum_{n \in \langle P\rangle} \frac{\mu(n)}{n}$ is conditionally convergent to zero if and only if $\sum_{p \in \P} \frac{1}{p}$ is infinite.
\end{theorem}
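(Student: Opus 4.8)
The plan is to split according to whether the series $\sum_{p\in\P}\frac1p$ converges. If it converges, then $\sum_{n\in\langle\P\rangle}\frac{|\mu(n)|}{n}=\prod_{p\in\P}\bigl(1+\tfrac1p\bigr)<\infty$, so the sum in \eqref{monster2} converges absolutely and unconditionally; since $\mu$ is multiplicative and supported on the squarefree integers, multiplying out the (absolutely convergent) product shows $\sum_{n\in\langle\P\rangle}\frac{\mu(n)}{n}=\prod_{p\in\P}\bigl(1-\tfrac1p\bigr)$, as desired. In the complementary case $\sum_{p\in\P}\frac1p=\infty$ the target value $\prod_{p\in\P}(1-\frac1p)$ is $0$ — by the standard fact that a product $\prod(1-a_p)$ with $0\le a_p<1$ vanishes exactly when $\sum a_p$ diverges, which is also what gives the concluding ``in particular'' assertion — so it remains to show $\sum_{n\in\langle\P\rangle:\,n\le x}\frac{\mu(n)}{n}\to0$.

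For this I would reduce to \eqref{mun} by the same M\"obius inversion used to prove Theorem~\ref{mon}. Writing $\P'$ for the complementary set of primes, the identity $1_{n\in\langle\P'\rangle}=\sum_{d\mid n,\,d\in\langle\P\rangle}\mu(d)$ — equivalently, the Dirichlet-convolution factorisation $\mu\,1_{\langle\P\rangle}=\mu\ast1_{\langle\P'\rangle}$ coming from $\zeta_\P(s)^{-1}=\zeta_{\P'}(s)/\zeta(s)$ — yields, on dividing by $n$ and summing over $n\le x$,
\[ \sum_{n\in\langle\P\rangle:\,n\le x}\frac{\mu(n)}{n}=\sum_{d\in\langle\P'\rangle:\,d\le x}\frac1d\,m\!\left(\frac xd\right), \]
where $m(y):=\sum_{n\le y}\frac{\mu(n)}{n}$. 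By \eqref{mun}, $m(y)=o(1)$. The gain is that this has traded the possibly ill-behaved $\zeta_\P$ for $\zeta$ (through $m$) and $\zeta_{\P'}$ (through the weights $1/d$, $d\in\langle\P'\rangle$), so only the behaviour of $\zeta$ on $\Re s=1$ is used — which is exactly why zeros or poles of $\zeta_\P$ on that line play no role.

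It remains to show the right-hand side tends to $0$. When moreover $\sum_{p\in\P'}\frac1p<\infty$ — in particular when $\P'$ is finite, e.g.\ $\P$ is all of the primes — the weights are summable, $\sum_{d\in\langle\P'\rangle}\frac1d=\prod_{p\in\P'}(1-\frac1p)^{-1}<\infty$, and since $|m|\le1$ with $m(y)\to0$, dominated convergence gives the claim at once. The delicate case is $\sum_{p\in\P'}\frac1p=\infty$: now $\sum_{d\in\langle\P'\rangle:\,d\le t}\frac1d$ diverges (though only like $O(\log t)$, by Mertens), so the terms with $d$ close to $x$, where $m(x/d)$ need not be small, contribute an amount that is merely $O(1)$ rather than $o(1)$ and has to be disposed of by cancellation, not by absolute bounds. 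Here I would use that the partial sum $S_\P(x):=\sum_{n\in\langle\P\rangle:\,n\le x}\frac{\mu(n)}{n}$ is slowly oscillating on the logarithmic scale — from $\sum_{y<n\le x}\frac1n=\log(x/y)+o(1)$ one gets $|S_\P(x)-S_\P(y)|\le\log(x/y)+o(1)$ — and feed this, together with a quantitative form of the prime number theorem such as $m(y)\ll\exp(-c\sqrt{\log y})$, into a Tauberian-type argument; alternatively one can peel off the finitely many primes of $\P$ below a parameter as a single finite Euler factor and try to control the remaining tail over large primes uniformly in $x$. Making the cancellation among the terms with $d$ comparable to $x$ effective, uniformly in $x$, is where I expect the main work to lie.

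Finally, once convergence is in hand, Abel's theorem gives a uniform way to read off the limit: the conditional sum equals $\lim_{s\to1^+}\sum_{n\in\langle\P\rangle}\frac{\mu(n)}{n^s}=\lim_{s\to1^+}\prod_{p\in\P}(1-p^{-s})$, and since each factor $1-p^{-s}$ decreases to $1-\frac1p$ as $s\downarrow1$ while every finite sub-product $\prod_{p\in F}(1-p^{-s})$ tends to $\prod_{p\in F}(1-\frac1p)$, squeezing identifies this limit as $\prod_{p\in\P}(1-\frac1p)$, in agreement with \eqref{monster2}.
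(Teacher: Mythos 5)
Your first case ($\sum_{p\in\P}\frac1p<\infty$) and your treatment of the subcase $\sum_{p\in\P'}\frac1p<\infty$ are correct, and the convolution identity $\mu\,1_{\langle\P\rangle}=1_{\langle\P'\rangle}\ast\mu$, which gives $\sum_{n\in\langle\P\rangle:\,n\le x}\frac{\mu(n)}{n}=\sum_{d\in\langle\P'\rangle:\,d\le x}\frac1d\,m(\frac xd)$, is in fact a slightly cleaner route to that subcase than the paper's own (the paper inverts in the other direction, writes the sum as $\sum_{d\in\langle\P'\rangle:\,d\le x}\frac{\mu(d)}{d}\sum_{m\le x/d}\frac{\mu(dm)}{m}$, and needs \eqref{mock-3} plus a second M\"obius inversion to get back to \eqref{mun}). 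Either way, \eqref{el}, \eqref{mun} and dominated convergence finish that subcase, and your Abel/squeezing identification of the limit in the absolutely convergent case is fine.

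The genuine gap is the subcase where both $\sum_{p\in\P}\frac1p$ and $\sum_{p\in\P'}\frac1p$ diverge, which you explicitly leave open. The Tauberian route you sketch is both harder than necessary and missing its key ingredient: the terms of $\sum_{d\in\langle\P'\rangle:\,d\le x}\frac1d m(\frac xd)$ with $d$ comparable to $x$ are controlled not by oscillation of $m$ but by the fact that $\langle\P'\rangle$ has asymptotic density zero --- a consequence of $\sum_{p\in\P}\frac1p=\infty$ via the Legendre sieve --- which you never invoke; and on the complementary range $d\le x/T$ the weights $\frac1d$ still sum to $\asymp\log x$, so the qualitative bound $m(y)=o(1)$ is not enough and you would be forced to import a quantitative prime number theorem (decay better than $1/\log y$), a strictly stronger input than \eqref{mun}. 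The paper instead disposes of this case in one line by recycling the inequality \eqref{zorn} from the proof of Theorem \ref{mon}: it yields
$$\Bigl|x\sum_{n\in\langle\P\rangle:\,n\le x}\frac{\mu(n)}{n}\Bigr|\le \#\{n\in\langle\P'\rangle:\,n\le x\}+\#\{n\in\langle\P\rangle:\,n\le x\},$$
and when both reciprocal sums diverge the Legendre sieve makes both counts $o(x)$. That is the idea your proposal is missing; with it, no cancellation among the $d\sim x$ terms and no effective error term in the prime number theorem are needed.
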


The proof of Theorem \ref{main} is also elementary (except for its use of \eqref{mun}, which is of course the special case of \eqref{monster2} when $\P$ consists of all the primes).  Interestingly, it is surprisingly difficult to replicate this elementary proof by zeta function methods, in large part due to the lack of meromorphic continuation alluded to earlier.  

\begin{remark}  A classical result of Wirsing (see e.g. \cite{hildebrand}) on mean values of multiplicative functions implies in particular that
$$ \frac{1}{x} \sum_{n \in \langle \P \rangle: n \leq x} \mu(n) = o(1).$$
This fact is also deducible from \eqref{monster2}.
\end{remark}

\begin{remark}\label{skal-rem} Theorem \ref{main} was also proven in \cite{skalba} by a similar method; in fact, the result in \cite{skalba} extends to arbitrary collections of prime ideals in algebraic number fields.
\end{remark}

We remark that the decay rate $o(1)$ in \eqref{monster2} is not uniform in $\P$.  For instance, if one takes $\P$ to be all the primes $p$ between $\sqrt{x}$ and $x$, one sees from Mertens' theorems that
$$ \sum_{n \in \langle \P \rangle: n \leq x} \frac{\mu(n)}{n} = 1 - \sum_{\sqrt{x} \leq p \leq x} \frac{1}{p} = 1 - \log 2 + o(1)$$
and
$$ \prod_{p \in \P} (1-\frac{1}{p}) = \frac{1}{2} + o(1)$$
and so one can keep the error term $o(1)$ in \eqref{monster2} bounded away from zero even for arbitrarily large $x$, by choosing $\P$ depending on $x$.  More precise statements of this nature can be found in \cite{gran}. 

Note that the above results do not hold when $\langle \P \rangle$ is replaced by a more general subsemigroup $G$ of the natural numbers (in which the generators are not necessarily prime).  For instance\footnote{We thank Andrew Sutherland for this example.}, if $G$ is the semigroup generated by the semiprimes (the products of two primes), then $\mu$ is either $0$ or $1$ and it is not difficult to see that $\sum_{n \in G} \frac{\mu(n)}{n}$ diverges.  One reason for the bad behaviour of these sums is that the zeta function $\sum_{n \in G} \frac{1}{n^s}$ no longer has an Euler product.

It is also essential that $\P$ consist of natural numbers, rather than merely real numbers larger than $1$ (as is the case in Beurling prime models), as the equal spacing of the integers is used in an essential way.  For instance, the inequality \eqref{monster} fails when $\P = \{1.1,1.2,1.3\}$ and $x=1.3$ (we thank Harold Diamond for this example and observation).

We thank Melvyn Nathanson and Keith Conrad for corrections, Harold Diamond for comments, and Wladyslaw Narkiewicz, Mariusz Ska{\l}ba, Kannan Soundararajan for references.  We are also indebted to the anonymous referee for corrections and suggestions.

\section{Proof of main theorem}

We now establish Theorem \ref{main}.  Fix $\P$; we allow all implied constants in the asymptotic notation to depend on $\P$.

If $\sum_{p \in \P} \frac{1}{p}$ is finite, then from the monotone convergence theorem we see that
$$ \sum_{n \in \langle \P \rangle} \frac{1}{n} = \prod_{p \in \P} (1 + \frac{1}{p-1})$$
is absolutely convergent, and thus (by dominated convergence)
$$ \sum_{n \in \langle \P \rangle} \frac{\mu(n)}{n} = \prod_{p \in \P} (1 - \frac{1}{p})$$
is conditionally convergent, giving the claim.  Thus we shall assume that $\sum_{p \in \P} \frac{1}{p}$ is infinite, in which case our task is to show that 
\begin{equation}\label{sum}
 \sum_{n \in \langle \P \rangle: n \leq x} \frac{\mu(n)}{n}  = o(1).
\end{equation}

Let $\P' := \{ p: p \not \in \P \}$ be the complement of $\P$ in the primes.  Suppose that $\sum_{p \in \P'} \frac{1}{p}$ is also infinite, thus
$$ \prod_{p \in \P} (1-\frac{1}{p}) = \prod_{p \in \P'} (1-\frac{1}{p}) = 0.$$
From elementary sieve theory (the Legendre sieve), this implies that both $\langle \P \rangle$ and $\langle \P' \rangle$ have asymptotic density zero, thus the right-hand side of \eqref{zorn} is $o(x)$, and \eqref{sum} follows.

The last remaining case is when $\sum_{p \in \P'} \frac{1}{p}$ is finite.  In this case, we use M\"obius inversion to write
$$ 1_{n \in \langle \P\rangle} = \sum_{d \in \langle \P'\rangle: d|n} \mu(d)$$
and thus
$$ \sum_{n \in \langle \P \rangle: n \leq x} \frac{\mu(n)}{n} = \sum_{d \in \langle \P'\rangle: d \leq x} \frac{\mu(d)}{d} \sum_{m \leq x/d} \frac{\mu(dm)}{m}.$$
By \eqref{mock-3}, $\frac{\mu(d)}{d} \sum_{m \leq x/d} \frac{\mu(dm)}{m}$ is bounded in magnitude by $\frac{1}{d}$.  As $\sum_{p \in \P'} \frac{1}{p}$ is finite, $\sum_{d \in \langle \P' \rangle} \frac{1}{d}$ is absolutely convergent, so by dominated convergence it suffices to show that
$$ \sum_{m \leq x} \frac{\mu(dm)}{m} = o(1)$$
for each fixed $d$.  

Fix $d$.  If we take $\P_d$ to be the primes dividing $d$, we observe the M\"obius inversion identity
\begin{align*}
\mu(dm) &= \mu(d) \mu(m) 1_{(d,m)=1} \\
&= \sum_{n \in \langle \P_d \rangle: n|m} \mu(d) \mu(m/n)
\end{align*}
and so
$$ \sum_{m \leq x} \frac{\mu(dm)}{m} = \mu(d) \sum_{n \in \langle \P_d \rangle: n \leq x}\frac{1}{n} \sum_{l \leq x/n} \frac{\mu(l)}{l}.$$
Since $\sum_{n \in \langle \P_d \rangle} \frac{1}{n}$ is absolutely convergent, the claim now follows from \eqref{el}, \eqref{mun} and the dominated convergence theorem.

\begin{rem}  By a convexity argument, one sees from \eqref{monster} that $|\sum_{n \leq x} \frac{\mu(n) a(n)}{n}| \leq 1$ for every multiplicative function $a: \N \to [0,1]$ taking values between zero and one (note that the expression in absolute values is affine-linear in $a(p)$ for each prime $p$, and \eqref{monster} is the special case when the $a(p)$ take the extreme values of $0$ and $1$); see also \cite{gran}.  It is also not difficult to adapt the arguments in this section to show that $\sum_{n=1}^\infty \frac{\mu(n) a(n)}{n}$ converges conditionally to $\prod_p (1 - \frac{a(p)}{p})$; we leave the details to the interested reader.
\end{rem}

\end{document}